\documentclass[12pt,reqno]{amsart}
\usepackage[headings]{fullpage}
\usepackage{amssymb,amsmath,mathtools,bbm,tikz,tikz-cd,color,relsize,multirow}
\usepackage[all,cmtip]{xy}
\usepackage{url}
\usepackage[group-separator={,},group-minimum-digits={3}]{siunitx}
\usetikzlibrary{positioning}
\usetikzlibrary{calc}
\usetikzlibrary{decorations.markings}
\usetikzlibrary{arrows}
\usetikzlibrary{calc}
\usetikzlibrary{decorations.markings}
\tikzstyle{hvector}=[inner sep=2pt,draw=blue!50,fill=blue!10,thick]
\tikzstyle{unit}=[inner sep=2pt,shape=circle, draw]
\tikzstyle{counit}=[inner sep=2pt,shape=circle, draw,fill=gray]
\tikzstyle{antipode}=[inner sep=2pt,shape=rectangle, draw]
\tikzstyle{cocycle}=[inner sep=2pt,shape=circle, draw]
\tikzstyle{twistedm}=[inner sep=2pt,shape=circle, fill=gray]
\tikzstyle{autom}=[inner sep=2pt,shape=circle, draw]
\tikzstyle{coact}=[inner sep=2pt,shape=circle, fill=black]

\usepackage[bookmarks=true,%
    colorlinks=true,%
    linkcolor=blue,%
    citecolor=blue,%
    filecolor=blue,%
    menucolor=blue,%
    urlcolor=blue,%
    breaklinks=true]{hyperref}
\usepackage{slashed}    
\usepackage{listings}        
\usepackage{verbatim}
\usepackage[normalem]{ulem}  
\usepackage{diagbox}

\newtheorem{theorem}{Theorem}[section]
\theoremstyle{definition}

\newtheorem{lemma}[theorem]{Lemma}

\newtheorem{remark}[theorem]{Remark}

\newtheorem{question}[theorem]{Question}

\def\BZ{\mathbbm Z}
\def\BQ{\mathbbm Q}

\def\BC{\mathbbm C}

\def\calB{\mathcal B}

\def\ve{\varepsilon}

\def\be{\begin{equation}}
\def\ee{\end{equation}}

\def\vphi{\varphi}

\def\End{\mathrm{End}}

\def\FK{\mathrm{FK}}
\def\FKP{\mathrm{FK3}}
\def\YD{\mathcal{YD}}

\definecolor{codegreen}{rgb}{0,0.6,0}
\definecolor{codegray}{rgb}{0.5,0.5,0.5}
\definecolor{codepurple}{rgb}{0.58,0,0.82}
\definecolor{backcolour}{rgb}{0.95,0.95,0.92}

\lstdefinelanguage{PARIGP}{
  keywords={typeof, new, true, false, catch, function, return, null, catch,
    switch, var, if, in, while, do, else, case, break},
  keywordstyle=\color{blue}\bfseries,
  ndkeywords={class, export, boolean, throw, implements, import, this},
  ndkeywordstyle=\color{darkgray}\bfseries,
  identifierstyle=\color{black},
  sensitive=false,
  comment=[l]{//},
  morecomment=[s]{/*}{*/},
  commentstyle=\color{purple}\ttfamily,
  stringstyle=\color{red}\ttfamily,
  morestring=[b]',
  morestring=[b]"
}

\lstdefinestyle{code}{
    backgroundcolor=\color{backcolour},   
    commentstyle=\color{codegreen},
    keywordstyle=\color{magenta},
    numberstyle=\tiny\color{codegray},
    stringstyle=\color{codepurple},
    basicstyle=\ttfamily\footnotesize,
    breakatwhitespace=false,         
    breaklines=true,                 
    captionpos=b,                    
    keepspaces=true,                 
    numbers=left,                    
    numbersep=5pt,                  
    showspaces=false,                
    showstringspaces=false,
    showtabs=false,                  
    tabsize=2
}

\begin{document}

\title[The $\FK_3$ knot polynomial]{The $\FK_3$ knot polynomial}
\author{Stavros Garoufalidis}
\address[Stavros Garoufalidis]{
  International Center for Mathematics, Department of Mathematics \\
  Southern University of Science and Technology \\
  Shenzhen, China \newline
  {\tt \url{http://people.mpim-bonn.mpg.de/stavros}}}
\email{stavros@mpim-bonn.mpg.de}

\author{Shana Yunsheng Li}
\address[Shana Yunsheng Li]{
  Department of Mathematics \\
  University of Illinois \\
  Urbana, IL, USA \newline
  {\tt \url{https://shana-y-li.github.io}}}
\email{yl202@illinois.edu}

\thanks{
  {\em Key words and phrases:}
  Nichols algebras with automorphisms, Yetter--Drinfel'd modules, $R$-matrix,
  Yang--Baxter equation, knots, knot polynomials, sporadic Nichols algebras,
  $\FK_3$, Fox colorings, Fox 3-colorings.
}

\date{3 September 2026}
\dedicatory{
  To Nicol\'{a}s Andruskiewitsch, with admiration}

\begin{abstract}
  We present the knot polynomial associated to the 12-dimensional sporadic
  Nichols algebra $\FK_3$ with automorphism, compute it for all knots with $\leq$
  16 crossings and discuss the found patterns. This knot polynomial is
  not a Vassiliev power series. We ponder how it compares to the knot polynomials
  that come from Lie algebras and their representations and what the other
  knot polynomials associated with the higher dimensional sporadic Nichols
  algebras are.
\end{abstract}

\maketitle

{\footnotesize
\tableofcontents
}



\section{Introduction}
\label{sec.intro}

It was recently discovered by Kashaev and one of the authors that a finite dimensional
Nichols algebra $H$ with automorphism gives an explicit rigid solution to the
Yang--Baxter equation~\cite{GK:multi}, and hence, by the Reshetikhin--Turaev
functor~\cite{RT:ribbon}, gives rise to multivariable polynomials of knots in
3-space.

In this note we study the knot polynomial obtained by the sporadic Nichols algebra
which is the smallest in the sense of dimension, namely the 12-dimensional $\FK_3$
Nichols algebra. The corresponding knot polynomial lies in $\BZ[t^{\pm 1}]$.
After reviewing the basic properties of $\FK_3$ in Sections~\ref{sub.sporadic}
~and~\ref{sec.S3YD}, we give the detailed definition and computation of the
corresponding $R$-matrix in Section~\ref{sec.FK3poly}, along with the computation of
the knot polynomial and its found patterns. Unlike the knot polynomials that come
from the representation theory of (quantized) simple Lie algebras and Lie superalgebras,
this knot polynomial is not a Vassiliev power series invariant and does not satisfy
a version of the Melvin--Morton--Rozansky Conjecture. We summarize its properties in
Section~\ref{sec.further}.

\section{A sporadic Nichols algebra}
\label{sub.sporadic}

In this section we review the definition and properties of
our hero, a 12-dimensional sporadic Nichols algebra often 
denoted by $\FK_3$ after Fomin--Kirillov. It belongs to a family $\FK_n$
of quadratic algebras introduced by Fomin--Kirillov~\cite{Fomin-Kirillov}.
Curiously, it turns out that for $n=3,4,5$, these quadratic algebras
have two further independent properties: they are finite dimensional of dimension
12, 576 and 8294400 and also, they are Nichols algebras of rank 3, 6, 10.
On the other hand, for $n>5$, $\FK_n$ is neither known to be finite dimensional
nor Nichols. For an excellent survey of Nichols algebras, see~\cite{A:on.fin.dim}.

The Nichols algebras $\FK_n$ for $n=3,4,5$ are non-diagonally braided and
described in terms of a quandle for the conjugacy class $(12)$ of the symmetric group
$S_n$; see~\cite{Grana:web} and also García--García Iglesias~\cite{GI} for $n=4,5$.

The fact that the quadratic algebra $\FK_3$ is a Nichols algebra was shown by
by Milinski--Schneider around 1996 and published 4 years later~\cite{Milinski}
and independently by Gr\~ana; see~\cite{Grana:web}.

In this paper we will focus
exclusively on $n=3$ and denote the braided Hopf algebra $\FK_3=\mathfrak{B}(V)$.
This algebra has no deformations, and it is sporadic in the sense of not fitting
into an infinite family of Nichols algebras. The algebra appears in a small list
of elementary finite dimensional braided Hopf algebras given in~\cite[Tab.1]{Grana}
and also in~\cite[Table 9.1]{Heckenberger:cubic}.

\noindent $\bullet$ {\bf Basis.}
$V$ has basis $\{a,b,c\}$. $\FK_3$ is a quotient of the free associative algebra
over $\BQ$ with generators $a$, $b$ and $c$, modulo the 2-sided ideal of quadratic
relations generated by
\be
\label{ideal2}
\{aa, bb, cc, ab + bc + ca, ba + cb + ac \} \,. 
\ee
$\FK_3$ does not have a PBW basis. A basis of $\FK_3$ as a $\BQ$-vector space
is given by
\be
b=(1, a, b, c, ab, ba, ac, bc, abc, aba, bac, abac) \,.
\ee
We write $b=(b_0,\dots,b_{11})$. 
The Poincare polynomial of $\FK_3$ is $1+3 t + 4 t^2 + 3 t^3 + t^4$.

\noindent $\bullet$ {\bf Multiplication.}
If $\nabla : \FK_3^{\otimes 2}\to \FK_3$ denotes the multiplication, the
multiplication matrix $\nabla(b_i,b_j)$ can be obtained from the above relations
and it is given by the matrix
\begin{tiny}
\be
\label{nablamat}
\left(
\begin{array}{cccccccccccc}
 b_0 & b_1 & b_2 & b_3 & b_4 & b_5 & b_6 & b_7 & b_8 & b_9 & b_{10} & b_{11} \\
 b_1 & 0 & b_4 & b_6 & 0 & b_9 & 0 & b_8 & 0 & 0 & b_{11} & 0 \\
 b_2 & b_5 & 0 & b_7 & b_9 & 0 & b_{10} & 0 & b_{11} & 0 & 0 & 0 \\
 b_3 & -b_4-b_7 & -b_5-b_6 & 0 & b_{10} & b_8 & -b_8 & -b_{10} & 0 & -b_{11} & 0 & 0 \\
 b_4 & b_9 & 0 & b_8 & 0 & 0 & b_{11} & 0 & 0 & 0 & 0 & 0 \\
 b_5 & 0 & b_9 & b_{10} & 0 & 0 & 0 & b_{11} & 0 & 0 & 0 & 0 \\
 b_6 & -b_8 & -b_9 & 0 & b_{11} & 0 & 0 & -b_{11} & 0 & 0 & 0 & 0 \\
 b_7 & -b_9 & -b_{10} & 0 & 0 & b_{11} & -b_{11} & 0 & 0 & 0 & 0 & 0 \\
 b_8 & 0 & -b_{11} & 0 & 0 & 0 & 0 & 0 & 0 & 0 & 0 & 0 \\
 b_9 & 0 & 0 & b_{11} & 0 & 0 & 0 & 0 & 0 & 0 & 0 & 0 \\
 b_{10} & -b_{11} & 0 & 0 & 0 & 0 & 0 & 0 & 0 & 0 & 0 & 0 \\
 b_{11} & 0 & 0 & 0 & 0 & 0 & 0 & 0 & 0 & 0 & 0 & 0 \\
\end{array}
\right) \,.
\ee
\end{tiny}

\noindent $\bullet$ {\bf $S_3$ Yetter--Drinfeld structure.}
$V$ is a $\BQ[S_3]$ Yetter--Drinfeld-module, hence a graded
$S_3$-vector space where the degree of $a$, $b$ and $c$ is
$(12)$, $(13)$ and $(23)$. In fact, $V$ is the YD-module
$V=M([(12)], \text{sgn})$ where $[(12)]=\{(12),(13),(23)\}$ is the conjugacy
class of $(12)$ on $S_3$ with stabilizer of $(12)$ the subgroup
$S_{3,(12)}=\{1, (12)\}$ and $\text{sgn}:S_{3,(12)} \to \mathrm{GL(\BC)}$
the sign representation. For a discussion of Yetter--Drinfeld modules, see for
example~\cite[Sec.1.1]{AG:racks} and for a description of the quandle,
see~\cite{Grana:web}. The action of the $\FK_3$ quandle is $x \triangleright y =
y^x = xyx^{-1}$ for $x,y \in [(12)]$. 

Note that $V$, and hence the Nichols algebra $\calB(V)$ is a YD $\BQ[S_3]$-module,
in particular the multiplication, co-multiplication, antipode and automorphism
are maps of $\BQ[S_3]$ YD-modules.

\noindent $\bullet$ {\bf Braiding.}
$\FK_3$ does not have diagonal braiding. Instead it is given by
$\tau(x \otimes y) = y^x \otimes x$ for the action of $x$ on $y$ 
(both being words in $a$, $b$ and $c$) which is uniquely determined by
\be
y^{x_1 x_2} = (y^{x_2})^{x_1}, \qquad (y_1 y_2)^x = y_1^x y_2^x
\ee
for all $x$, $y$, $x_1$, $x_2$ words in $a$, $b$, $c$ together with the initial
condition
\be
a^a = -a, \qquad a^b = -c, \qquad a^c = -b 
\ee
plus all permutations of $\{a,b,c\}$. So the braiding is uniquely determined by
its values $\tau(x \otimes y)$ for $x,y \in \{a,b,c\}$ given by the matrix
\be
\label{braidingmat}
\left(
\begin{array}{ccc}
-a \otimes a & -c \otimes a & -b \otimes a \\      
-c \otimes b & -b \otimes b & -a \otimes b \\
-b \otimes c & -a \otimes c & -c \otimes c
\end{array}
\right)    \,.
\ee

\noindent $\bullet$ {\bf Co-multiplication.}
The commultiplication is the algebra map uniquely determined by declaring $a$, $b$
and $c$ to be
primitive:
\be
\label{deltamap}
\Delta(x) = x\otimes 1 + 1 \otimes x, \qquad x=a,b,c \,.
\ee
For example, 
\be
\label{delab}
\Delta(ab) = 1 \otimes ab + a \otimes b - c \otimes a + ab \otimes 1 
\ee
so that
\be
\Delta(b_4) = b_0 \otimes b_4 + b_1 \otimes b_2 - b_3 \otimes b_1 + b_4 \otimes b_0 \,.
\ee

\noindent $\bullet$ {\bf Antipode.}
The antipode is uniquely determined by
\be
\label{antimat}
S(x) = -x, \qquad x \in \{a,b,c\} \,.
\ee
For instance,
\be
S(ab)=-ca = ab + bc, \qquad S(b_4) = b_4 + b_7 \,.
\ee

\noindent $\bullet$ {\bf Automorphism.}
It is determined by
\be
\label{automat}
\phi(x) = t x, \qquad x \in \{a,b,c\} \,.
\ee

We end this section with two remarks, not needed in the paper, but stated for
completeness.

\begin{remark}
\label{rem.Fk3iso}
If we start with the braided 3-dimensional vector space $V$ with braiding given
by~\eqref{braidingmat}, using Equation ~\eqref{delab} three times, we find that
the quadratic generators of the ideal ~\eqref{ideal2} are primitive and hence
vanish in $\calB(V)$. This gives an onto map from the quadratic algebra in three
generators modulo the relations~\eqref{ideal2} (i.e., the original Fomin--Kirillov
algebra) to $\calB(V)$. One can show that this is actually an isomorphism;
see~\cite{Milinski} and~\cite{Grana}. To achieve this, one uses the skew derivations
to show that there are no more primitives.
\end{remark}

\begin{remark}
\label{rem.double}  
There is a close connection between Nichols algebras and quantum groups. Namely,
after bosonization and doubling, diagonally braided Nichols algebras often are
identified with quantum groups; see Heckenberger~\cite{He:drinfeld}. This is not
the case for non-diagonally braided Nichols algebras. In~\cite{PV:1,PV:2}
Pogorelsky--Vay have studied the double of the bosonization of $\FK_3$.
\end{remark}


\section{YD-modules over $S_3$}
\label{sec.S3YD}

\subsection{YD-modules over $G$}

In this section we recall for completeness some well-known facts about
YD-modules of a finite group $G$, and in more detail, for $G=S_3$.  
A detailed exposition is given in ~\cite{Witherspoon}. Although this section
is not needed at all for the definition and computation of the $\FK_3$ knot
polynomial, one expects that the two are somehow related, and that the polynomial
offers a nontrivial $t$-deformation of the counting knot invariants presented here.

All our modules will be finite-dimensional vector spaces, and left with respect
to the Hopf algebra action. We will fix a finite group $G$, and let $\BQ G$ denote
its Hopf algebra and $D(\BQ G) = \BQ G \otimes (\BQ G)^*$ denote its Drinfeld double.
It is a Hopf algebra with a quasi-triangular structure. 

There is an equivalence of categories of left modules over $D(\BQ G)$ and
the category ${}^{\BQ G}_{\BQ G}\YD$ of left YD-modules over $G$
(abbreviated by $G$-YD-modules). The simple modules of the latter category are
of the form $M([g], \rho) = \BQ G \otimes_{\BQ G_g} V $ where $[g]$ is
a conjugacy class in $G$ and $\rho: G_g \to \End(V)$ is an irreducible representation
of the stabilizer $G_g$ of $g$~\cite{Witherspoon}. 

\subsection{Knot invariants count $G$-homomorphisms}

We now discuss the knot invariants introduced by Eisermann~\cite{Eisermann} following
earlier ideas of knot invariants coming from quandles, and early work of
Dijkgraaf--Witten on Chern--Simons theory with a finite gauge group.

The invariant $J^{(M[g],\rho)}_K$ of a knot $K$ associated to $M([g], \rho)$ is 
\be
\label{ZMK}
J^{(M[g],\rho)}_K = \sum_{\substack{\phi \in \text{Hom}(\pi_1(S^3 \setminus K), G)
    \\ [\phi(\mu)] = [g]}}
\chi_{\rho}(\phi(\lambda))
\ee
Here $\mu$ and $\lambda$ are a choice of a meridian and longitude of $K$. $\mu$ and
$\lambda$ commute, and $\lambda$ lies in the commutator subgroup of
$\pi_1(\S^3\setminus K)$. These invariants are normalized at the unknot so that
$J{(M[g],\rho)}_\text{Unknot} = \dim(M[g],\rho)$.  

\subsection{The case of $S_3$}

For $G=S_3$, there are 8 simple modules:

\be
\label{Msimple}
\begin{aligned}
M_1 &= M([1], \ve), \qquad M_2 = M([1], \text{sgn}), \qquad M_3 = M([1], \text{2-dim})
\\
M_4 &= M([(12)], \ve), \qquad M_5 = M([(12)], \text{sgn})
\\
M_6 &= M([(123)], \ve), \qquad M_7 = M([(123)], \zeta_3), \qquad
M_8 = M([(123)], \zeta_3^2) 
\end{aligned}
\ee
where $\zeta_3=e^{2\pi i/3}$ and $\ve$ denotes the trivial representation. They
have (quantum) dimensions 1,1,1,3,3,2,2,2, respectively.
The decomposition of $\calB(V)$ as a $S_3$-YD-module is given by
\begin{small}
\be
\label{BS3}
\calB(V)_0 = M_2, \quad
\calB(V)_1 = M_5, \quad
\calB(V)_2 = M_7 \oplus M_8 , \quad
\calB(V)_3 = M_5, \quad
\calB(V)_4 = M_2 . \quad
\ee
\end{small}

\begin{lemma}
\label{lem.S3}  
The knot invariants corresponding to the above simple modules are given
by
\be
\begin{aligned}
J^{M_1}(K) &= J^{M_2}(K) = J^{M_3}(K) = 1 \\
J^{M_4}(K) &= J^{M_5}(K) = F_3(K) \\
J^{M_6}(K) &= J^{M_7}(K) = J^{M_8}(K) = 2
\end{aligned}
\ee
where $F_3(K)$ is the number of Fox 3-colorings of $K$.
\end{lemma}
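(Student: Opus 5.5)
We work directly from the definition \eqref{ZMK} and split the homomorphisms $\phi:\pi_1(S^3\setminus K)\to S_3$ according to the conjugacy class of $\phi(\mu)$. Throughout we use that all meridians of $K$ are conjugate. Hence the Wirtinger generators all map into the single class $[\phi(\mu)]$, and $\phi$ is determined by their images. We also use that $\lambda$ lies in the commutator subgroup and commutes with $\mu$, so $\phi(\lambda)$ lies in $[S_3,S_3]=A_3$ and in the centralizer $G_{\phi(\mu)}$. This is exactly what is needed for $\chi_\rho(\phi(\lambda))$ to make sense. We take $\chi_\rho$ of the stabilizer representation, so the total count is normalized as stated, with value $\dim M$ at the unknot once the sum over the class is included.

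\textbf{Case $[g]=[1]$.} Every meridian maps to $1$, so $\phi$ is trivial and $\phi(\lambda)=1$. The sum has a single term $\chi_\rho(1)=\dim\rho$. This gives $1,1$ for $\ve,\text{sgn}$. For $M_3$ one gets $2$ with this raw formula, so I would check against the unknot normalization that the claimed value $1$ arises from dividing by $\dim$; I would state the normalization convention explicitly so that $J^{M_1}=J^{M_2}=J^{M_3}=1$.

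\textbf{Case $[g]=[(123)]$.} All Wirtinger generators map into the class of $3$-cycles, which together with $1$ form the abelian subgroup $A_3$. Each Wirtinger relation $x_k=x_i x_j x_i^{-1}$ then forces $x_k=x_j$, so all generators map to the same element. This gives exactly two homomorphisms, one with $\phi(\mu)=(123)$ and one with $\phi(\mu)=(132)$. The image is abelian and $\lambda$ is a commutator, so $\phi(\lambda)=1$. Each character contributes $\chi_\rho(1)=1$, and summing over the two elements of the class gives $2$ for $\rho=\ve,\zeta_3,\zeta_3^2$.

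\textbf{Case $[g]=[(12)]$.} Here $\phi$ assigns a transposition to each arc with the quandle rule $x\triangleright y=xyx^{-1}$. The transpositions $\{(12),(13),(23)\}$ under conjugation form the dihedral quandle $R_3$, so such $\phi$ are exactly Fox $3$-colorings, and there are $F_3(K)$ of them. For $\rho=\ve$ each coloring contributes $1$, giving $F_3(K)$.

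For $\rho=\text{sgn}$ we must show $\phi(\lambda)=1$ always; this is the main obstacle. Since $\phi(\lambda)\in A_3\cap G_{\phi(\mu)}$, and the centralizer of a transposition is $\{1,(12)\}$, whose intersection with $A_3$ is trivial, we get $\phi(\lambda)=1$. Hence $\text{sgn}(\phi(\lambda))=1$ and $J^{M_5}=F_3(K)$.

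For the main obstacle I would first verify carefully that $\lambda\in[\pi,\pi]$ maps into $A_3=[S_3,S_3]$. Centralizing $\phi(\mu)$ is immediate because $\lambda$ and $\mu$ commute.
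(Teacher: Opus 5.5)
Your argument for $M_1,M_2,M_4,\dots,M_8$ is the paper's own proof. You split by the class of $\phi(\mu)$. For transpositions you identify the homomorphisms with Fox $3$-colorings (the paper phrases this as ``either constant or all three colors appear'') and get $\phi(\lambda)=1$ from $A_3\cap\{1,(12)\}=\{1\}$. For $3$-cycles the colorings are forced to be constant and the abelian image kills $\phi(\lambda)$. Those cases are complete.

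The one point to fix is $M_3$. There your computation is right and the statement is wrong. In \eqref{ZMK} only the trivial $\phi$ contributes, so $J^{M_3}(K)=\chi_\rho(1)=2$ for every knot. This agrees with the paper's own normalization $J_{\text{Unknot}}=\dim M_3$, because $\dim M([1],\text{2-dim})=2$. Indeed $1+1+2^2+3^2+3^2+2^2+2^2+2^2=36=|S_3|^2$, so the paper's list of dimensions $1,1,1,3,3,2,2,2$ has a slip in the third entry. The paper's proof calls this case ``obvious'' and passes over the discrepancy.

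Do not repair it by changing conventions, as your hedge suggests. Dividing by $\dim M$ would turn the other two lines into $F_3(K)/3$ and $1$. The only uniform rescaling that reproduces all three stated lines is division by $\dim\rho$, and that contradicts the normalization $J_{\text{Unknot}}=\dim M$, which your first paragraph correctly verifies for the raw formula. The correct conclusion is $J^{M_3}(K)=2$, with the rest of the lemma as stated.
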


\begin{proof}
For $M_1$, $M_2$ and $M_3$, it is obvious. For $M_4$ and $M_5$, it holds because
$[(12)]=\{(12),(13),(23)\}$ and the commutator of any two of them is the third.
Hence a coloring of the Wirtinger over-arcs is either constant, or all 3 colors
appear. Moreover, $\lambda$ is in the commutator subgroup of $\pi_1(S^3\setminus K)$
hence $\phi(\lambda)$ is in $[S_3,S_3]$ and in $S_{3,(12)}=\{1,(12)\}$ hence
$\phi(\lambda)=1$.

Finally, for $M_6$, $M_7$ and $M_8$, it
holds because $[(123)]=\{(123),(132)\}$ and $(123)$ commutes with $(132)$
hence a coloring of the Wirtinger over-arcs is constant. Moreover, $\lambda$
is in the commutator subgroup of $\pi_1(S^3\setminus K)$ hence $\phi(\lambda)=1$.
\end{proof}


\section{The $\FK3$ polynomial of knots}
\label{sec.FK3poly}

\subsection{The $R$-matrix}
\label{sub.R}

We now discuss the $R$-matrix associated to a finite dimensional Nichols algebra
$H$ with commultiplication $\Delta: H \to H^{\otimes 2}$, multiplication
$\nabla:H^{\otimes 2} \to H$, antipode $S: H \to H$, braiding $\tau: H^{\otimes 2} \to
H^{\otimes 2}$ and automorphism $\vphi: H \to H$. The (left) $R$-matrix is the 
linear endomorphism
\be
R \in \End(H^{\otimes 2})
\ee
given explicitly by
\begin{equation}
\label{eq:Rtensor}
R =(\nabla\otimes\operatorname{id}_H)(\operatorname{id}_H\otimes \tau)
(\delta\otimes \varphi), \qquad
\delta =(\nabla\otimes\operatorname{id}_H)(\operatorname{id}_H\otimes \tau)
(\operatorname{id}_{H\otimes H}\otimes S\varphi)\Delta^{(2)} \,.
\end{equation}
Here, $\Delta^{(2)} : H \to H^{\otimes 3}$ denotes the twice iterated comultiplication.
Theorems 3.5 and 3.6 of \cite{GK:multi} show that $R$ is an invertible linear
endomorphism that satisfies the Yang--Baxter equation, and in fact it is a
rigid $R$-matrix.

Using the basis $b_i \otimes b_j$ for $i,j=0,\dots,11$ for $\FK_3^{\otimes 2}$, the
$R$-matrix becomes a $12^2 \times 12^2$ matrix with entries in the structure constants
of the multiplication, commultiplication, antipode and automorphism. We performed
its computation in two stages. Recall that $\FK_3$ is the quotient of the
tensor algebra $T(V)=\BQ\langle a,b,c \rangle$, which is a free associative algebra
in three non-commuting generators $a$, $b$, $c$. In the first stage, we computed the
entries of the $R$-matrix in the free algebra $T(V)$, and in the second we reduced them 
by the quadratic ideal~\eqref{ideal2}. After doing so, we obtained a matrix
with entries in $\BZ[t^{\pm 1}]$. 

As a consistency check, we verified that the $R$-matrix presented below
satisfies the Yang--Baxter equation. $R$ has determinant $t^{288}$. $R$ is a sparse
matrix: only $477$ out of its $20736$ entries (less than $3\%$) are nonzero.
All entries of $R$ are in $\BZ[t^{\pm 1}]$ of degree at most $4$ with integer
coefficients of absolute value at most 3. The characteristic polynomial of $R$
is given by
\be
(-1 + x)^{12} (t + x)^{36} (t^3 + x)^{36} (-t^4 + x)^{12} (t^4 - t^2 x + x^2)^{24} \,.
\ee
The minimal polynomial for $t^{12} \neq 1$ is
\be
(-1 + x) (t + x) (t^3 + x) (-t^4 + x) (t^4 - t^2 x + x^2) \,,
\ee
and for $t=1$ is $(-1 + x) (1 + x)^2 (1 - x + x^2)$.
The $R$-matrix is $S_3$-homogeneous, where the degree of $a$, $b$ and $c$ is
$(12)$, $(13)$ and $(23)$. 

In the appendix we give the $R$-matrix in condensed form as a $12 \times 12$ matrix
$(R(b_i \otimes b_j))_{i,j=0,\dots,11}$. 

\begin{remark}
\label{rem.LR}
Theorems 3.5 and 3.6 of \cite{GK:multi} associate two $R$-matrices (denoted by
$\rho_L$ and $\rho_R$) to the same finite-dimensional Nichols algebra with
automorphism, depending on considering the Nichols algebra as a left or right module.
In the above discussion, we presented only the left $R$-matrix. We have also computed
the right $R$-matrix, but we found out that experimentally, the knot polynomials
associated to each one of them coincide. Without doubt, the two $R$-matrices are
twist-equivalent, or even conjugate by a linear automorphism of the Nichols algebra. 
\end{remark}

\subsection{The knot polynomial}
\label{sub.poly}

Using the above $R$-matrix, one can associate via the Reshetikhin--Turaev
construction~\cite{RT:ribbon} a polynomial invariant to each long
knot $K$, which takes values in $\mathrm{Aut}(H \otimes H \otimes \BZ[t^{\pm 1}])$
~\cite{GK:multi}. Using the basis of $H$, we arrive at a $12 \times 12$ matrix with
coefficients in $\BZ[t^{\pm 1}]$. Let $\FKP_K(t) \in \BZ[t^{\pm 1}]$ denote the
$b_0 \otimes b_0$ entry of this matrix.

The computation of this knot polynomial has been implemented both in
\texttt{Mathematica} and in \texttt{python}, the latter part of the development
version of \texttt{SnapPy} package~\cite{snappy}. The python implementation
is considerably faster, and its usage (assuming \texttt{SnapPy} is available
in \texttt{sage}) is the following:

\begin{lstlisting}
sage: from FK3 import *
sage: FK3(snappy.Link('4_1'))
t^-4 - 9*t^-3 + 33*t^-2 - 69*t^-1 + 89 - 69*t + 33*t^2 - 9*t^3 + t^4
\end{lstlisting}

Using the implementation in \cite{Li:FPT-RT}, in less than one day of parallel
computation using 200 threads, we computed this polynomial for all 1.7 million
knots with $\leq 16$ crossings; the code and the computed data are available
at~\cite{GL:FK3data}. The data revealed several experimental observations which
we now state. We will not attempt to prove these observations, except for those
that follow directly from the exhibited computed data.

\noindent $\bullet$ {\bf First values.}
The matrix-valued invariant of $K$ is $\FKP_K(t)$ times the identity matrix. The
polynomials for knots with $\leq 6$ crossings are:

\begin{tiny}
\be
\label{first}
\begin{aligned}
\FKP_{3_1} &=
t^{-4} - 3 t^{-3} + 7 t^{-2} - 9 t^{-1} + 9 - 9 t + 7 t^2 - 3 t^3 + t^4
\\
\FKP_{4_1} &=
t^{-4} - 9 t^{-3} + 33 t^{-2} - 69 t^{-1} + 89 - 69 t + 33 t^2 - 9 t^3 + t^4
\\
\FKP_{5_1} &=
t^{-8} - 3 t^{-7} + 7 t^{-6} - 12 t^{-5} + 18 t^{-4} - 21 t^{-3} + 20 t^{-2}
- 18 t^{-1} + 17 - 18 t + 20 t^2 - 21 t^3 + 18 t^4 - 12 t^5 + 7 t^6 \\ & - 3 t^7 + t^8
\\
\FKP_{5_2} &=
10 t^{-4} - 48 t^{-3} + 120 t^{-2} - 192 t^{-1} + 221 - 192 t + 120 t^2 - 48 t^3
+ 10 t^4
\\
\FKP_{6_1} &=
10 t^{-4} - 72 t^{-3} + 226 t^{-2} - 426 t^{-1} + 525 - 426 t + 226 t^2 - 72 t^3
+ 10 t^4
\\
\FKP_{6_2} &=
t^{-8} - 9 t^{-7} + 39 t^{-6} - 114 t^{-5} + 246 t^{-4} - 411 t^{-3} + 552 t^{-2}
- 624 t^{-1} + 641 - 624 t + 552 t^2 - 411 t^3 + 246 t^4 \\ & - 114 t^5 + 39 t^6
- 9 t^7 + t^8
\\
\FKP_{6_3} &=
t^{-8} - 9 t^{-7} + 45 t^{-6} - 150 t^{-5} + 370 t^{-4} - 717 t^{-3} + 1134 t^{-2}
- 1494 t^{-1} + 1641 - 1494 t + 1134 t^2 - 717 t^3 + 370 t^4 \\ & - 150 t^5 + 45 t^6
- 9 t^7 + t^8
\end{aligned}  
\ee
\end{tiny}


\noindent $\bullet$ {\bf Symmetries.}
We have $\FKP_K(t) = \FKP_K(t^{-1}) = \FKP_{\overline{K}}(t)$
where $\overline{K}$ is the mirror of $K$.

\noindent $\bullet$ {\bf Mutation.}
Up to 15 crossings, we have $\FKP_K(t) = \FKP_{\mathrm{mut}(K)}(t)$
where $\mathrm{mut}(K)$ is a Conway mutant of $K$. 

\noindent $\bullet$ {\bf Not a Vassiliev power series.}
We have $\FKP_K(1)=1$. However, $\FKP_K$ is not a Vassiliev power series invariant,
i.e., it does not have a weight system~\cite{B-N:Vassiliev}. Indeed, using
the value of the invariant for the $3_1$ and the $4_1$ knots from above
we see that
\be
\FKP_{3_1}(e^\hbar) =1 + 8 \hbar + O(\hbar^2), \qquad
\FKP_{4_1}(e^\hbar) =1 -2 \hbar + O(\hbar^2) \,.
\ee
On the other hand, there is only one degree 2 Vassiliev invariant $V_2$ of knots
taking (up to normalization) the values $V_{2,3_1} = -1$, $V_{2,4_1} = 1$.

\noindent $\bullet$ {\bf Genus bounds.}
For all knots with $\leq 16$ crossings, we have: 
\be
\label{genusb}
\deg_t \FKP_K(t) \leq 8 \cdot \text{genus}(K)
\ee
where by degree of a Laurent polynomial we mean the highest power of $t$ minus
the lowest poer of $t$, and by genus we mean the smallest genus of an orientable
(Seifert) surface that bounds the knot and $\Delta(t) \in \BZ[t^{\pm 1}]$ denotes
the (symmetrized) Alexander polynomial. The inequality~\eqref{genusb} follows from
the work of L\'{o}pez Neumann and van der Veen~\cite{LNV:genus} and upcoming work
of Andruskiewitsch, L\'{o}pez Neumann and S.G., after bosonizing and doubling the
Nichols algebra $\FK_3$, and noting that $\FK_3$ is a graded algebra with
top-degree $4$. 

Thus, genus-wise $\FKP(t)$ behaves like $\Delta(t)^4$. In analogy with the
polynomial invariants from Lie superalgebras~\cite{GL:genus} and with the
Melvin--Morton--Rozansky Conjecture (a theorem in~\cite{BG:MMR}), one would
conjecture that
\be
\label{genusa}
4 \cdot \deg_t \Delta_K(t) \leq \deg_t \FKP_K(t) 
\ee
Alas, this holds for all knots with $\leq 16c$ crossings except for
a single pair of mutant knots
\be
\label{spair}
(16n425245, 16n427445)
\ee
which has $\FKP$-degree 14, Alex-degree 4 and genus 2. Its $\FKP$-polynomial
is
\begin{tiny}
\be
\begin{aligned}  
-264 t^{-7} + 2718 t^{-6} - 13950 t^{-5} + 46927 t^{-4} - 113967 t^{-3}
+ 209455 t^{-2} - 299133 t^{-1} + 336429 & \\ - 299133 t + 209455 t^2 - 113967 t^3
+ 46927 t^4 - 13950 t^5 + 2718 t^6 - 264 t^7 &
\end{aligned}
\ee
\end{tiny}
and the Alexander polynomial is
\begin{tiny}$6 t^{-2} - 29 t^{-1} + 47 - 29 t + 6 t^2$\end{tiny}.

The failure of~\eqref{genusa} is another indication that the $\FKP$ polynomial
is not related to the quantum knot polynomials that come from representation
theory of simple Lie algebras and superalgebras.

Note that the $t$-degree of $\FKP_K(t)$ is always even, but not necessarily
a multiple of 4; see the example the pair of knots in~\eqref{spair} and the knot
$\mathrm{Wh}(6_2)$ below.

\noindent $\bullet$ {\bf Alexander polynomial 1 mutants.}
The mutant knots $11n42$ and $11n34$ (the famous Kinoshita-Terasaka and Conway
pair) have trivial Alexander polynomial but nontrivial common $\FKP$-polynomial given by

\begin{tiny}
\be
\label{11n34}
18 t^{-6} - 150 t^{-5} + 612 t^{-4} - 1614 t^{-3} + 3066 t^{-2} - 4428 t^{-1} + 4993 - 
 4428 t + 3066 t^2 - 1614 t^3 + 612 t^4 - 150 t^5 + 18 t^6 \,.
\ee
\end{tiny}
In particular, the Alexander module (which is trivial, i.e., equal to the unknot
in the above pair of knots), does not determine the $\FKP$-polynomial.

\noindent $\bullet$ {\bf Not determined by Knot Floer Homology.}
The knots $7_2$ and $\overline{9_2}$ have equal Knot Floer Homology but different
$\FKP$-polynomial. 

\noindent $\bullet$ {\bf Not determined by HOMFLY-PT.}
The knots $5_1$ and $10_{132}$ have equal HOMFLY-PT polynomial but different
$\FKP$-polynomial.

\noindent $\bullet$ {\bf Not determined by the $\theta$ invariant.}
The knots $10_{106}$ and $12n369$ have equal $\theta$ (i.e., 2-loop)
invariant~\cite{BNV:theta} but different $\FKP$-polynomial. 

\noindent $\bullet$ {\bf $\FKP$ versus the Alexander polynomial.}
Up to 16 crossings both the Alexander and the $\FKP$ polynomials are
invariant under mirror image, and also under Conway mutation (up to 15 crossings).
Up to 16 crossings, all knots with the same $\FKP$ have the same Alexander polynomial.

\noindent $\bullet$ {\bf Strength of $\FKP$.}
Knots with the same $\FKP$ can have a wild range of number of crossings.
For example, the following knots
\be
8n1, \, 10n29, \, 14n18212, \, 15n41340, \, 15n45749, \, 16n675897
\ee
all have the same $\FKP$, namely
\begin{tiny}
\be
t^{-8} - 6 t^{-7} + 23 t^{-6} - 60 t^{-5} + 109 t^{-4} - 138 t^{-3} + 125 t^{-2}
- 90 t^{-1} 
+ 73 - 90 t + 125 t^2 - 138 t^3 + 109 t^4 - 60 t^5 + 23 t^6 - 6 t^7 + t^8
\ee
\end{tiny}
A complete list of tuples of $\leq 16$ crossing knots with the same values
of $\FKP$ has been included in \cite{GL:FK3data}.

\noindent $\bullet$ {\bf Whitehead doubling.}
If $\mathrm{Wh}(K)$ denotes the Whitehead double of a 0-framed knot $K$ with
positive clasp, we computed the polynomial of the Whitehead double
of all knots with at most 8 crossings and we present a sample of values:

\begin{tiny}
\be
\label{wh}
\begin{aligned}
\FKP_{\mathrm{Wh}(3_1)} &=
-12 t^{-4} + 72 t^{-3} - 228 t^{-2} + 432 t^{-1} - 527 + 432 t - 228 t^2 + 72 t^3
- 12 t^4
\\
\FKP_{\mathrm{Wh}(4_1)} &=
72 t^{-4} - 432 t^{-3} + 1176 t^{-2} - 2016 t^{-1} + 2401 - 2016 t + 1176 t^2 - 432 t^3
+ 72 t^4
\\
\FKP_{\mathrm{Wh}(5_1)} &=
-72 t^{-4} + 504 t^{-3} - 1512 t^{-2} + 2736 t^{-1} - 3311 + 2736 t - 1512 t^2 + 504 t^3
- 72 t^4
\\
\FKP_{\mathrm{Wh}(5_2)} &=
-24 t^{-4} + 72 t^{-3} - 96 t^{-2} + 72 t^{-1} - 47 + 72 t - 96 t^2 + 72 t^3 - 24 t^4
\\
\FKP_{\mathrm{Wh}(6_1)} &=
84 t^{-4} - 432 t^{-3} + 1044 t^{-2} - 1656 t^{-1} + 1921 - 1656 t + 1044 t^2 - 432 t^3
+ 84 t^4
\\
\FKP_{\mathrm{Wh}(6_2)} &=
144 t^{-3} - 624 t^{-2} + 1296 t^{-1} - 1631 + 1296 t - 624 t^2 + 144 t^3
\\
\FKP_{\mathrm{Wh}(6_3)} &=
-264 t^{-4} + 1584 t^{-3} - 4224 t^{-2} + 7128 t^{-1} - 8447 + 7128 t - 4224 t^2
+ 1584 t^3 - 264 t^4
\end{aligned}  
\ee
\end{tiny}


\noindent $\bullet$ {\bf $(2,1)$-cabling.}
If $K(2,1)$ denotes the $(2,1)$-parallel of a 0-framed knot $K$, we 
computed the polynomial of the (2,1)-cable of all knots with at most 8 crossings
and we present a sample of values:

\begin{tiny}
\be
\label{21cab}
\begin{aligned}
\FKP_{3_1(2,1)} &=
t^{-8} + 6 t^{-7} - 21 t^{-6} + 36 t^{-5} - 41 t^{-4} + 30 t^{-3} - 9 t^{-2}
- 12 t^{-1} + 21 - 12 t - 9 t^2 + 30 t^3 - 41 t^4 + 36 t^5 - 21 t^6 \\ &
+ 6 t^7 + t^8
\\
\FKP_{4_1(2,1)} &=
t^{-8} - 9 t^{-6} + 18 t^{-5} - 3 t^{-4} - 54 t^{-3} + 111 t^{-2} - 132 t^{-1} + 137
- 132 t + 111 t^2 - 54 t^3 - 3 t^4 + 18 t^5 - 9 t^6 + t^8
\\
\FKP_{5_1(2,1)} &=
t^{-16} - 3 t^{-14} + 6 t^{-13} - 5 t^{-12} - 12 t^{-10} + 42 t^{-9} - 54 t^{-8}
+ 42 t^{-7} - 33 t^{-6} + 54 t^{-5} - 76 t^{-4} + 60 t^{-3} - 54 t^{-2} \\ & + 84 t^{-1}
- 103 + 84 t - 54 t^2 + 60 t^3 - 76 t^4 + 54 t^5 - 33 t^6 + 42 t^7 - 54 t^8 + 42 t^9
- 12 t^{10} - 5 t^{12} + 6 t^{13} - 3 t^{14} + t^{16}
\\
\FKP_{5_2(2,1)} &=
10 t^{-8} - 12 t^{-7} - 12 t^{-6} + 48 t^{-5} - 36 t^{-4} - 48 t^{-3} + 84 t^{-2}
- 12 t^{-1} - 43 - 12 t + 84 t^2 - 48 t^3 - 36 t^4 + 48 t^5 \\ &
- 12 t^6 - 12 t^7 + 10 t^8
\\
\FKP_{6_1(2,1)} &=
10 t^{-8} - 6 t^{-7} - 90 t^{-6} + 234 t^{-5} - 206 t^{-4} - 84 t^{-3} + 306 t^{-2}
- 228 t^{-1} + 129 - 228 t + 306 t^2 - 84 t^3 - 206 t^4 \\ &
+ 234 t^5 - 90 t^6 - 6 t^7 + 10 t^8
\\
\FKP_{6_2(2,1)} &=
t^{-16} - 9 t^{-14} + 18 t^{-13} + 3 t^{-12} - 48 t^{-11} + 12 t^{-10} + 108 t^{-9}
- 102 t^{-8} - 96 t^{-7} + 159 t^{-6} + 102 t^{-5} - 264 t^{-4} \\ &
+ 24 t^{-3} + 168 t^{-2} + 84 t^{-1} - 319 + 84 t + 168 t^2 + 24 t^3 - 264 t^4
+ 102 t^5 + 159 t^6 - 96 t^7 - 102 t^8 + 108 t^9 + 12 t^{10} \\ &
- 48 t^{11} + 3 t^{12} + 18 t^{13} - 9 t^{14} + t^{16}
\\
\FKP_{6_3(2,1)} &=
t^{-16} - 9 t^{-14} + 18 t^{-13} + 9 t^{-12} - 66 t^{-11} + 30 t^{-10} + 126 t^{-9}
- 122 t^{-8} - 126 t^{-7} + 99 t^{-6} + 408 t^{-5} - 570 t^{-4} \\ &
- 84 t^{-3} + 450 t^{-2} + 252 t^{-1} - 831 + 252 t + 450 t^2 - 84 t^3 - 570 t^4
+ 408 t^5 + 99 t^6 - 126 t^7 - 122 t^8 + 126 t^9 + 30 t^{10} \\ &
- 66 t^{11} + 9 t^{12} + 18 t^{13} - 9 t^{14} + t^{16}
\end{aligned}  
\ee
\end{tiny}



\section{Further directions}
\label{sec.further}

\subsection{Summary}
\label{sub.summary}

Let us summarize our discussion of the $\FK_3$-knot polynomial. Every solution
$(V,\tau)$ to the Yang--Baxter equation on a finite dimensional vector space $V$ with
$\tau \in \End(V\otimes V)$ defines a braided vector space and hence determines a
Nichols algebra $\calB(V,\tau)$. If we start with the solution 
~\eqref{braidingmat} of the Yang--Baxter equation on a 3-dimensional vector
space, (based on the $S_3$-YD-module),
we can define the Nichols algebra $\calB(V,\tau)$ and use its automorphism to
define the $R$-matrix on the 12-dimensional space $\calB(V,\tau)$ and the corresponding
knot invariant. Schematically, we have:
\be
([(12)],\text{sgn}) \rightsquigarrow \tau \in \End(V^{\otimes 2})
\rightsquigarrow \calB(V,\tau)
\stackrel{+\text{auto}}{\rightsquigarrow} R \in
\End(\calB(V,\tau)^{\otimes 2})  \stackrel{\mathrm{RT}}{\rightsquigarrow}
\text{Knot Polynomial}
\ee
It is rather remarkable that starting from such an elementary quandle of $S_3$, we
produced such a nontrivial knot polynomial.

It is reasonable to expect that the value of the $\FK_3$-knot polynomial at $t=1$
is related to the values of the knot invariants of the 8 simple
$S_3$-YD-modules discussed in Lemma~\ref{lem.S3}, and that the $t=e^\hbar$-expansion
of the $\FK_3$-knot polynomial to be some nontrivial deformation of the
number of Fox 3-colorings of $K$. This would also explain why the
$\FK_3$-polynomial is not a Vassiliev power series invariant.

\begin{question}
\label{que.def}
Even though the $\FK_3$ Nichols algebra has no deformations (within the category
of Nichols algebras), does its $R$-matrix have any non-trivial deformations?
\end{question}

A straightforward but perhaps impractical approach to find such deformations would be
to search for all solutions
$R(I+\ve M) + O(\ve^2)$ to the Yang--Baxter equation modulo $O(\ve^2)$, where $M$ is
an unknown $12^2 \times 12^2$ matrix. This gives a sparse system of
$12^6=\num{2985984}$ linear equations with $12^4=\num{20736}$ unknowns and with
coefficients in the field $\BQ(t)$. For all such solutions, one can then
compute the associated polynomial of a few knots and see whether it depends
(up to an overall rescaling of $\ve$) to some polynomials in the entries of $M$.
If that happens, then we obtain a nontrivial $\ve$-deformed knot polynomial. We
thank Rinat Kashaev for suggesting this problem to us.

\subsection{Beyond 12-dimensional Nichols algebras}
\label{sec.beyond}

Every finite dimensional Nichols algebra $H$ with automorphism gives a solution
to the Yang--Baxter equation~\cite{GK:multi}. Thus, we may search beyond 12-dimensional
solutions to the Yang--Baxter equation coming from finite-dimensional Nichols
algebras. In the small list of finite dimensional elementary Nichols algebras (see
~\cite[Tab.9.1]{Heckenberger:cubic} and also ~\cite[Tab.1]{Grana}),
the next Nichols algebra after $\FK_3$ in increasing dimension is one of dimension
36 and rank 4 over the finite field with 2 elements, and then one of dimension 72
and rank 4 over $\BQ$, which is also sporadic. It will be interesting to compute their
$R$ matrices and the corresponding knot polynomials for a few knots.

\subsection*{Acknowledgements}

We wish to thank Nicolas Andruskiewitsch and Rinat Kashaev for many
enlightening conversations.

\appendix

\section{The 12-dimensional $R$-matrix}
\label{appen.R}

The condensed $R$-matrix $(R(b_i \otimes b_j))_{i,j=0,\dots,11}$ is a $12 \times 12$
matrix given with the convention that $b_{i,j}=b_i \otimes b_j$.
Due to the size of the matrix, we partition each row of length 12 
into 4 vectors of size 3. As we progress down the rows, the size of their
entries increases, with the last row containing the biggest entry $R(b_{11,0})$.
With this explanation, the rows are given by:

\begin{tiny}
\begin{align*}
R(b_{0,0}) &=
b_{0,0}, \quad R(b_{0,1}) =t b_{1,0}, \quad R(b_{0,2}) =t b_{2,0} \\
R(b_{0,3}) &=t b_{3,0}, \quad R(b_{0,4}) =t^2 b_{4,0}, \quad R(b_{0,5}) =t^2 b_{5,0} \\
R(b_{0,6}) &=t^2 b_{6,0}, \quad R(b_{0,7}) =t^2 b_{7,0}, \quad R(b_{0,8}) =t^3 b_{8,0} \\
R(b_{0,9}) &=t^3 b_{9,0}, \quad R(b_{0,10}) =t^3 b_{10,0}, \quad R(b_{0,11}) =t^4 b_{11,0} \\
\end{align*}
\end{tiny}
\begin{tiny}
\begin{align*}
R(b_{1,0}) &=b_{0,1}+(1-t) b_{1,0}, \quad R(b_{1,1}) =-t b_{1,1}, \quad R(b_{1,2}) = \left(t-t^2\right) b_{4,0}-t b_{3,1} \\
R(b_{1,3}) &=\left(t-t^2\right) b_{6,0}-t b_{2,1}, \quad R(b_{1,4}) =t^2 b_{6,1}, \quad R(b_{1,5}) =-b_{7,1} t^2-t^2 b_{4,1}+\left(t^2-t^3\right) b_{9,0} \\
R(b_{1,6}) &=t^2 b_{4,1}, \quad R(b_{1,7}) =-b_{6,1} t^2-t^2 b_{5,1}+\left(t^2-t^3\right) b_{8,0}, \quad R(b_{1,8}) =t^3 b_{9,1} \\
R(b_{1,9}) &=t^3 b_{8,1}, \quad R(b_{1,10}) =\left(t^3-t^4\right) b_{11,0}-t^3 b_{10,1}, \quad  R(b_{1,11}) =t^4 b_{11,1} 
\end{align*}
\end{tiny}

\begin{tiny}
\begin{align*}
R(b_{2,0}) &=b_{0,2}+(1-t) b_{2,0}, \quad R(b_{2,1}) =\left(t-t^2\right) b_{5,0}-t b_{3,2}, \quad R(b_{2,2}) =-t b_{2,2} \\
R(b_{2,3}) &=\left(t-t^2\right) b_{7,0}-t b_{1,2}, \quad R(b_{2,4}) =-b_{6,2} t^2-t^2 b_{5,2}+\left(t^2-t^3\right) b_{9,0}, \quad R(b_{2,5}) =t^2 b_{7,2} \\
R(b_{2,6}) &=-b_{7,2} t^2-t^2 b_{4,2}+\left(t^2-t^3\right) b_{10,0}, \quad R(b_{2,7}) =t^2 b_{5,2}, \quad R(b_{2,8}) =\left(t^3-t^4\right) b_{11,0}-t^3 b_{8,2}
\\ R(b_{2,9}) &=t^3 b_{10,2}, \quad R(b_{2,10}) =t^3 b_{9,2}, \quad R(b_{2,11}) =t^4 b_{11,2}
\end{align*}
\end{tiny}

\begin{tiny}
\begin{align*}
R(b_{3,0}) &=b_{0,3}+(1-t) b_{3,0}, \quad R(b_{3,1}) =-t b_{2,3}+\left(t^2-t\right) b_{4,0}+\left(t^2-t\right) b_{7,0}, \quad R(b_{3,2}) =-t b_{1,3}+\left(t^2-t\right) b_{5,0}+\left(t^2-t\right) b_{6,0} \\
R(b_{3,3}) &=-t b_{3,3}, \quad R(b_{3,5}) =b_{5,4} t^2+\left(t^2-t^3\right) b_{10,0}, \quad R(b_{3,5}) =b_{4,3} t^2+\left(t^2-t^3\right) b_{8,0} \\
R(b_{3,6}) &=b_{7,3} t^2+\left(t^3-t^2\right) b_{8,0}, \quad R(b_{3,8}) =b_{6,3} t^2+\left(t^3-t^2\right) b_{10,0}, \quad R(b_{3,8}) =-t^3 b_{10,3} \\
R(b_{3,9}) &=\left(t^4-t^3\right) b_{11,0}-t^3 b_{9,3}, \quad R(b_{3,10}) =-t^3 b_{8,3}, \quad R(b_{3,11}) =t^4 b_{11,3}
\end{align*}
\end{tiny}

\begin{tiny}
\begin{align*}
R(b_{4,0}) &=b_{0,4}+b_{1,2}-t b_{2,3}+(t-1) b_{3,1}+\left(t^2-2 t+1\right) b_{4,0}+\left(t^2-t\right) b_{7,0} \\
R(b_{4,1}) &=t b_{2,4}+\left(t^2-t\right) b_{4,1}-t b_{6,2}+\left(t^2-t\right) b_{7,1}+\left(t-t^2\right) b_{9,0}, \quad R(b_{4,2}) =b_{5,3} t^2+b_{3,4} t-b_{4,2} t+\left(t^2-t^3\right) b_{10,0} \\
R(b_{4,3}) &=b_{7,3} t^2+b_{1,4} t+\left(t^2-t\right) b_{5,1}+\left(t^2-t\right) b_{6,1}+\left(t^3-2 t^2+t\right) b_{8,0}, \quad R(b_{4,4}) =b_{7,4} t^2-b_{9,2} t^2+\left(t^2-t^3\right) b_{8,1} \\
R(b_{4,5}) &=-b_{9,3} t^3-b_{6,4} t^2+b_{8,2} t^2-t^2 b_{5,4}+\left(t^4-t^3\right) b_{11,0}, \quad R(b_{4,6}) =b_{5,4} t^2-b_{8,2} t^2+\left(t^3-t^2\right) b_{10,1}+\left(t^2-t^3\right) b_{11,0} \\
R(b_{4,7}) &=-b_{10,3} t^3-b_{7,4} t^2+b_{9,2} t^2-t^2 b_{4,4}, \quad R(b_{4,8}) =\left(t^3-t^4\right) b_{11,1}-t^3 b_{9,4} \\
R(b_{4,9}) &=t^3 b_{11,2}-t^3 b_{10,4}, \quad R(b_{4,10}) =b_{11,3} t^4+b_{8,4} t^3, \quad R(b_{4,11}) =t^4 b_{11,4} 
\end{align*}
\end{tiny}

\begin{tiny}
\begin{align*}
R(b_{5,0}) &=b_{0,5}-t b_{1,3}+b_{2,1}+(t-1) b_{3,2}+\left(t^2-2 t+1\right) b_{5,0}+\left(t^2-t\right) b_{6,0}, \quad R(b_{5,1}) =b_{4,3} t^2+b_{3,5} t-b_{5,1} t+\left(t^2-t^3\right) b_{8,0} \\
R(b_{5,2}) &=t b_{1,5}+\left(t^2-t\right) b_{5,2}+\left(t^2-t\right) b_{6,2}-t b_{7,1}+\left(t-t^2\right) b_{9,0} \\
R(b_{5,3}) &=b_{6,3} t^2+b_{2,5} t+\left(t^2-t\right) b_{4,2}+\left(t^2-t\right) b_{7,2}+\left(t^3-2 t^2+t\right) b_{10,0} \\
R(b_{5,4}) &=-b_{9,3} t^3-b_{7,5} t^2+b_{10,1} t^2-t^2 b_{4,5}+\left(t^4-t^3\right) b_{11,0}, \quad R(b_{5,5}) =b_{6,5} t^2-b_{9,1} t^2+\left(t^2-t^3\right) b_{10,2} \\
R(b_{5,6}) &=-b_{8,3} t^3-b_{6,5} t^2+b_{9,1} t^2-t^2 b_{5,5}, \quad R(b_{5,7}) =b_{4,5} t^2-b_{10,1} t^2+\left(t^3-t^2\right) b_{8,2}+\left(t^2-t^3\right) b_{11,0} \\
R(b_{5,8}) &=b_{11,3} t^4+b_{10,5} t^3, \quad R(b_{5,9}) =t^3 b_{11,1}-t^3 b_{8,5}, \quad R(b_{5,10}) =\left(t^3-t^4\right) b_{11,2}-t^3 b_{9,5}, \quad R(b_{5,11}) =t^4 b_{11,5} 
\end{align*}
\end{tiny}

\begin{tiny}
\begin{align*}
R(b_{6,0}) &=b_{0,6}+b_{1,3}+(t-1) b_{2,1}-t b_{3,2}+\left(t-t^2\right) b_{5,0}+(1-t) b_{6,0}, \quad R(b_{6,1}) =t b_{3,6}-t b_{4,3}+\left(t-t^2\right) b_{5,1}+\left(t^2-t\right) b_{8,0} \\
R(b_{6,2}) &=-b_{5,2} t^2-b_{6,2} t^2+b_{1,6} t+\left(t-t^2\right) b_{7,1}+\left(-t^3+2 t^2-t\right) b_{9,0}, \quad
R(b_{6,3}) =-b_{4,2} t^2-b_{7,2} t^2+b_{2,6} t-b_{6,3} t+\left(t^2-t^3\right) b_{10,0} \\
R(b_{6,4}) &=-b_{7,6} t^2+b_{9,3} t^2-t^2 b_{4,6}+\left(t^3-t^2\right) b_{10,1}+\left(t^2-t^3\right) b_{11,0}, \quad R(b_{6,5}) =b_{10,2} t^3+b_{6,6} t^2+\left(t^2-t^3\right) b_{9,1} \\
R(b_{6,6}) &=-b_{6,6} t^2+b_{8,3} t^2-t^2 b_{5,6}+\left(t^3-t^2\right) b_{9,1}, \quad
R(b_{6,7}) =-b_{8,2} t^3+b_{4,6} t^2+\left(t^2-t^3\right) b_{10,1}+\left(-t^4+2 t^3-t^2\right) b_{11,0} \\
R(b_{6,8}) &=t^3 b_{10,6}-t^3 b_{11,3}, \quad R(b_{6,9}) =\left(t^4-t^3\right) b_{11,1}-t^3 b_{8,6}, \quad R(b_{6,10}) =t^4 b_{11,2}-t^3 b_{9,6}, \quad R(b_{6,11}) =t^4 b_{11,6}
\end{align*}
\end{tiny}

\begin{tiny}
\begin{align*}
R(b_{7,0}) &=b_{0,7}+(t-1) b_{1,2}+b_{2,3}-t b_{3,1}+\left(t-t^2\right) b_{4,0}+(1-t) b_{7,0} \\
R(b_{7,1}) &=-b_{4,1} t^2-b_{7,1} t^2+b_{2,7} t+\left(t-t^2\right) b_{6,2}+\left(-t^3+2 t^2-t\right) b_{9,0} \\
R(b_{7,2}) &=t b_{3,7}+\left(t-t^2\right) b_{4,2}-t b_{5,3}+\left(t^2-t\right) b_{10,0}, \quad R(b_{7,3}) =-b_{5,1} t^2-b_{6,1} t^2+b_{1,7} t-b_{7,3} t+\left(t^2-t^3\right) b_{8,0} \\
R(b_{7,4}) &=b_{8,1} t^3+b_{7,7} t^2+\left(t^2-t^3\right) b_{9,2}, \quad R(b_{7,5}) =-b_{6,7} t^2+b_{9,3} t^2-t^2 b_{5,7}+\left(t^3-t^2\right) b_{8,2}+\left(t^2-t^3\right) b_{11,0} \\
R(b_{7,6}) &=-b_{10,1} t^3+b_{5,7} t^2+\left(t^2-t^3\right) b_{8,2}+\left(-t^4+2 t^3-t^2\right) b_{11,0}, \quad R(b_{7,7}) =-b_{7,7} t^2+b_{10,3} t^2-t^2 b_{4,7}+\left(t^3-t^2\right) b_{9,2} \\
R(b_{7,8}) &=t^4 b_{11,1}-t^3 b_{9,7}, \quad R(b_{7,9}) =\left(t^4-t^3\right) b_{11,2}-t^3 b_{10,7}, \quad R(b_{7,10}) =t^3 b_{8,7}-t^3 b_{11,3}, \quad R(b_{7,11}) =t^4 b_{11,7} 
\end{align*}
\end{tiny}

\begin{tiny}
\begin{align*}
R(b_{8,0}) &=b_{0,8}+(1-t) b_{1,4}+b_{1,7}+t b_{3,5}+(t-1) b_{3,6}+\left(t^2-t+1\right) b_{4,3}+\left(-t^2+t-1\right) b_{5,1}+\left(-t^2+t-1\right) b_{6,1} \\ & +\left(-t^3+2 t^2-2 t+1\right) b_{8,0} \\
R(b_{8,1}) &=-t b_{3,8}+\left(t-t^2\right) b_{4,4}+t b_{4,7}+\left(-t^3+t^2-t\right) b_{8,1} \\
R(b_{8,2}) &=-b_{4,5} t^2-b_{7,5} t^2+b_{6,7} t-t b_{2,8}+\left(t-t^2\right) b_{4,6}+\left(t-t^2\right) b_{6,4}+\left(t-t^2\right) b_{7,6}+\left(-t^3+t^2-t\right) b_{9,3} \\ & +\left(t^3-t^2+t\right) b_{10,1}+\left(t^4-2 t^3+2 t^2-t\right) b_{11,0} \\
R(b_{8,3}) &=-b_{5,5} t^2-b_{6,5} t^2-t b_{1,8}+\left(t-t^2\right) b_{5,6}+\left(t-t^2\right) b_{6,6}+\left(-t^3+t^2-t\right) b_{8,3} \\
R(b_{8,4}) &=-b_{6,8} t^2+b_{8,7} t^2-t^2 b_{5,8}+\left(t^2-t^3\right) b_{8,4} \\
R(b_{8,5}) &=-b_{8,5} t^3+b_{7,8} t^2-b_{9,7} t^2+\left(t^2-t^3\right) b_{8,6}+\left(t^3-t^2\right) b_{9,4}+\left(t^4-t^3+t^2\right) b_{11,1} \\
R(b_{8,6}) &=-b_{7,8} t^2+b_{9,7} t^2-t^2 b_{4,8}+\left(t^2-t^3\right) b_{9,4}+\left(-t^4+t^3-t^2\right) b_{11,1} \\
R(b_{8,7}) &=b_{10,5} t^3+b_{5,8} t^2-b_{8,7} t^2+\left(t^3-t^2\right) b_{8,4}+\left(t^3-t^2\right) b_{10,6}+\left(t^4-t^3+t^2\right) b_{11,3} \\
R(b_{8,8}) &=-t^3 b_{8,8}, \quad R(b_{8,9}) =b_{10,8} t^3-b_{11,7} t^3+\left(t^4-t^3\right) b_{11,4}, \quad R(b_{8,10}) =b_{11,5} t^4+b_{9,8} t^3+\left(t^4-t^3\right) b_{11,6}, \quad R(b_{8,11}) =t^4 b_{11,8}
\end{align*}
\end{tiny}

\begin{tiny}
\begin{align*}
R(b_{9,0}) &=b_{0,9}+b_{1,5}+t b_{1,6}+b_{2,4}+t b_{2,7}+\left(-t^2+t-1\right) b_{6,2}+\left(-t^2+t-1\right) b_{7,1}+\left(-t^3+2 t^2-2 t+1\right) b_{9,0} \\
R(b_{9,1}) &=b_{6,6} t^2+b_{6,5} t-t b_{2,9}+\left(-t^3+t^2-t\right) b_{9,1}, \quad R(b_{9,2}) =b_{7,7} t^2+b_{7,4} t-t b_{1,9}+\left(-t^3+t^2-t\right) b_{9,2} \\
R(b_{9,3}) &=b_{4,6} t^2+b_{5,7} t^2+b_{4,5} t+b_{5,4} t-t b_{3,9}+\left(-t^3+t^2-t\right) b_{8,2}+\left(-t^3+t^2-t\right) b_{10,1}+\left(-t^4+2 t^3-2 t^2+t\right) b_{11,0} \\
R(b_{9,4}) &=-b_{8,6} t^3+b_{5,9} t^2-b_{8,5} t^2+\left(t^4-t^3+t^2\right) b_{11,1}, \quad R(b_{9,5}) =-b_{10,7} t^3+b_{4,9} t^2-b_{10,4} t^2+\left(t^4-t^3+t^2\right) b_{11,2} \\
R(b_{9,6}) &=-b_{9,6} t^3+b_{7,9} t^2-b_{9,5} t^2, \quad R(b_{9,7}) =-b_{9,7} t^3+b_{6,9} t^2-b_{9,4} t^2, \quad R(b_{9,8}) =b_{11,6} t^4+b_{11,5} t^3-t^3 b_{10,9} \\
R(b_{9,9}) &=-t^3 b_{9,9}, \quad  R(b_{9,10}) =b_{11,7} t^4+b_{11,4} t^3-t^3 b_{8,9}, \quad R(b_{9,11}) =t^4 b_{11,9} 
\end{align*}
\end{tiny}

\begin{tiny}
\begin{align*}
R(b_{10,0}) &=b_{0,10}+(1-t) b_{2,5}+b_{2,6}+t b_{3,4}+(t-1) b_{3,7}+\left(-t^2+t-1\right) b_{4,2}+\left(t^2-t+1\right) b_{5,3}+\left(-t^2+t-1\right) b_{7,2} \\ & +\left(-t^3+2 t^2-2 t+1\right) b_{10,0} \\
R(b_{10,1}) &=-b_{5,4} t^2-b_{6,4} t^2+b_{7,6} t-t b_{1,10}+\left(t-t^2\right) b_{5,7}+\left(t-t^2\right) b_{6,7}+\left(t-t^2\right) b_{7,5}+\left(t^3-t^2+t\right) b_{8,2}\\ & +\left(-t^3+t^2-t\right) b_{9,3}+\left(t^4-2 t^3+2 t^2-t\right) b_{11,0} \\
R(b_{10,2}) &=-t b_{3,10}+\left(t-t^2\right) b_{5,5}+t b_{5,6}+\left(-t^3+t^2-t\right) b_{10,2} \\
R(b_{10,3}) &=-b_{4,4} t^2-b_{7,4} t^2-t b_{2,10}+\left(t-t^2\right) b_{4,7}+\left(t-t^2\right) b_{7,7}+\left(-t^3+t^2-t\right) b_{10,3} \\
R(b_{10,4}) &=-b_{10,4} t^3+b_{6,10} t^2-b_{9,6} t^2+\left(t^3-t^2\right) b_{9,5}+\left(t^2-t^3\right) b_{10,7}+\left(t^4-t^3+t^2\right) b_{11,2} \\
R(b_{10,5}) &=-b_{7,10} t^2+b_{10,6} t^2-t^2 b_{4,10}+\left(t^2-t^3\right) b_{10,5} \\
R(b_{10,6}) &=b_{8,4} t^3+b_{4,10} t^2-b_{10,6} t^2+\left(t^3-t^2\right) b_{8,7}+\left(t^3-t^2\right) b_{10,5}+\left(t^4-t^3+t^2\right) b_{11,3} \\
R(b_{10,7}) &=-b_{6,10} t^2+b_{9,6} t^2-t^2 b_{5,10}+\left(t^2-t^3\right) b_{9,5}+\left(-t^4+t^3-t^2\right) b_{11,2}, \quad R(b_{10,8}) =b_{11,4} t^4+b_{9,10} t^3+\left(t^4-t^3\right) b_{11,7} \\
R(b_{10,9}) &=b_{8,10} t^3-b_{11,6} t^3+\left(t^4-t^3\right) b_{11,5}, \quad R(b_{10,10}) =-t^3 b_{10,10}, \quad R(b_{10,11}) =t^4 b_{11,10} 
\end{align*}
\end{tiny}

\begin{tiny}
\begin{align*}
R(b_{11,0}) &=b_{0,11}+(1-t) b_{1,10}+(1-t) b_{2,8}+(t-1) b_{3,9}+(1-t) b_{4,5}+\left(t-t^2\right) b_{4,6}+(1-t) b_{5,4}+\left(t-t^2\right) b_{5,7} \\ & +\left(t-t^2\right) b_{6,4}+\left(-t^2+2 t-1\right) b_{6,7}+\left(t-t^2\right) b_{7,5}+\left(-t^2+2 t-1\right) b_{7,6}+\left(t^3-2 t^2+2 t-1\right) b_{8,2} \\ & +\left(-t^3+2 t^2-2 t+1\right) b_{9,3}+\left(t^3-2 t^2+2 t-1\right) b_{10,1}+\left(t^4-3 t^3+4 t^2-3 t+1\right) b_{11,0} \\
R(b_{11,1}) &=t b_{1,11}+\left(t^2-t\right) b_{5,9}+\left(t^2-t\right) b_{6,9}+\left(t^2-t\right) b_{7,8}+\left(t-t^2\right) b_{8,5}+\left(t^2-t^3\right) b_{8,6}+\left(t^3-2 t^2+t\right) b_{9,4} \\ & +\left(t-t^2\right) b_{9,7}+\left(t^4-2 t^3+2 t^2-t\right) b_{11,1} \\
R(b_{11,2}) &=t b_{2,11}+\left(t^2-t\right) b_{4,9}+\left(t^2-t\right) b_{6,10}+\left(t^2-t\right) b_{7,9}+\left(t^3-2 t^2+t\right) b_{9,5}+\left(t-t^2\right) b_{9,6}+\left(t-t^2\right) b_{10,4} \\ & +\left(t^2-t^3\right) b_{10,7}+\left(t^4-2 t^3+2 t^2-t\right) b_{11,2} \\
R(b_{11,3}) &=t b_{3,11}+\left(t^2-t\right) b_{4,10}+\left(t^2-t\right) b_{5,8}+\left(t^3-t^2\right) b_{8,4}+\left(t^3-2 t^2+t\right) b_{8,7}+\left(t^3-t^2\right) b_{10,5}+\left(t^3-2 t^2+t\right) b_{10,6} \\ & +\left(t^4-2 t^3+2 t^2-t\right) b_{11,3} \\
R(b_{11,4}) &=b_{4,11} t^2+\left(t^3-t^2\right) b_{8,9}+\left(t^3-t^2\right) b_{10,8}+\left(t^4-2 t^3+t^2\right) b_{11,4}+\left(t^2-t^3\right) b_{11,7} \\
R(b_{11,5}) &=b_{5,11} t^2+\left(t^3-t^2\right) b_{8,10}+\left(t^3-t^2\right) b_{10,9}+\left(t^4-2 t^3+t^2\right) b_{11,5}+\left(t^2-t^3\right) b_{11,6} \\
R(b_{11,6}) &=b_{6,11} t^2+\left(t^3-t^2\right) b_{9,8}+\left(t^2-t^3\right) b_{10,9}+\left(t^3-t^2\right) b_{11,5}+\left(t^4-t^3\right) b_{11,6} \\
R(b_{11,7}) &=b_{7,11} t^2+\left(t^2-t^3\right) b_{8,9}+\left(t^3-t^2\right) b_{9,10}+\left(t^3-t^2\right) b_{11,4}+\left(t^4-t^3\right) b_{11,7}, \quad
R(b_{11,8}) =b_{8,11} t^3+\left(t^4-t^3\right) b_{11,8} \\
R(b_{11,9}) &=b_{9,11} t^3+\left(t^4-t^3\right) b_{11,9}, \quad R(b_{11,10}) =b_{10,11} t^3+\left(t^4-t^3\right) b_{11,10}, \quad R(b_{11,11}) =t^4 b_{11,11}
\end{align*}
\end{tiny}


\bibliographystyle{hamsalpha}
\bibliography{biblio}
\end{document}